\documentclass[11pt,letterpaper,reqno]{amsart}

\usepackage[T1]{fontenc}
\usepackage{amsmath,amssymb,amsthm,mathtools}
\usepackage[colorlinks=true,linkcolor=blue,citecolor=blue,urlcolor=blue]{hyperref}
\newtheorem{theorem}{Theorem}[section]
\newtheorem{corollary}[theorem]{Corollary}
\newtheorem{lemma}[theorem]{Lemma}
\theoremstyle{remark}
\newtheorem{remark}[theorem]{Remark}

\newcommand{\C}{\mathbb C}
\newcommand{\D}{\mathcal D}

\newcommand{\dA}{\,dA}
\newcommand{\loc}{\mathrm{loc}}
\newcommand{\eps}{\varepsilon}

\DeclareMathOperator{\dist}{dist}

\begin{document}

\title[Infinitesimal Circular Morera Theorem]{An Infinitesimal Circular Morera Theorem}

\author[Q. Guo]{Qiteng Guo}
\address{College of Mathematics and Statistics, Northwest Normal University,
Lanzhou 730070, China}
\email{qiteng.guo@outlook.com}

\author[A. Xiao]{Ao Xiao}
\address{College of Mathematics and Statistics, Northwest Normal University,
Lanzhou 730070, China}
\email{xiaoao2math@gmail.com}

\subjclass[2020]{Primary 30A05; Secondary 30E20, 31A05, 35D30, 46F05}
\keywords{Morera theorem, circular integrals,  weak holomorphy, mean-value property}

\begin{abstract}
We prove an infinitesimal circular version of Morera's theorem.  Let
\(D\subset\C\) be a domain and let \(f\in C(D)\).  If, at every
\(a\in D\),
\[
        \int_{|\zeta-a|=r}f(\zeta)\,d\zeta=o(r^2)\qquad (r\to0^+),
\]
then \(f\) is holomorphic in \(D\).  In particular, exact vanishing of all
sufficiently small centered circular integrals implies holomorphicity.  The
proof uses a local distributional \(\partial\)-primitive, a circular identity
for weak \(\partial\)-derivatives, and a pointwise asymptotic mean-value
criterion for harmonicity.
\end{abstract}

\maketitle

\section{Introduction}\label{sec:introduction}

Throughout the paper, a domain is a connected open subset of the complex
plane.  We use
\[
        \partial=\frac12(\partial_x-i\partial_y),\qquad
        \bar\partial=\frac12(\partial_x+i\partial_y).
\]
Morera's theorem is the classical converse to Cauchy's theorem: a continuous
complex-valued function on a domain is holomorphic once its integrals over a
sufficiently rich family of closed curves vanish.  In one standard formulation
it is enough to test boundaries of triangles contained in the domain; see, for
example, Ahlfors \cite{Ahlfors1979}.  Thus Morera's theorem converts an
integral condition, imposed on closed curves, into the differential conclusion
\(\bar\partial f=0\).

A natural refinement is to ask how much of the family of test curves is really
needed.  The problem considered here is a local circular version of this
question.  It was posed by D.~Gaier and L.~Zalcman and appeared as
Problem~7.28 in Anderson--Barth--Brannan
\cite[Problem~7.28]{AndersonBarthBrannan1977}; it is also recorded in
Hayman--Lingham's problem collection
\cite[Problem~7.28]{HaymanLingham2019}.  In that formulation one asks whether a
continuous function must be holomorphic if the integrals over all sufficiently
small circles centered at the point in question vanish, and also whether the
weaker infinitesimal condition of order \(o(r^2)\) is already sufficient.  The
same problem points to Zalcman's work on analyticity, the Pompeiu problem, and
mean-value equations \cite{Zalcman1972,Zalcman1973}.  For neighboring recent
work, Guo--He \cite{GuoHe2026} resolve the area-measure interpretation of
Hayman--Lingham Problem~7.29 by studying a variable-radius disk transform.
That problem concerns area integrals over center-dependent disks and is
geometrically distinct from the boundary-integral condition studied here, but
both belong to the same group of Zalcman problems on integral rigidity.  We use
this circle of results only as context; the proof below is local and does not
require the Fourier--Laplace machinery associated with the Pompeiu problem.

For \(a\in\C\) and \(r>0\), let
\[
        B(a,r)=\{z\in\C: |z-a|<r\},\qquad
        C(a,r)=\{z\in\C: |z-a|=r\},
\]
where \(C(a,r)\) is positively oriented.  If \(u\) is continuous in a
neighbourhood of \(\overline{B(a,r)}\), set
\begin{equation}\label{eq:def-mean-integral}
        A_u(a,r)=\frac1{2\pi}\int_0^{2\pi}u(a+re^{it})\,dt,
        \qquad
        J_u(a,r)=\int_{C(a,r)}u(\zeta)\,d\zeta .
\end{equation}
If \(D\subset\C\) is a domain, write
\(\rho_D(a)=\dist(a,\partial D)\), with the convention
\(\rho_D(a)=\infty\) when \(D=\C\).  All circular means and integrals below are
taken only for radii \(0<r<\rho_D(a)\).

With the notation in \eqref{eq:def-mean-integral}, the scale \(r^2\) is forced
by the differentiable case.  If \(f\in C^1(D)\),
Green's formula gives, whenever \(\overline{B(a,r)}\subset D\),
\begin{equation}\label{eq:intro-green}
        J_f(a,r)=\int_{C(a,r)} f(\zeta)\,d\zeta
        =2i\iint_{B(a,r)}\bar\partial f\,\dA .
\end{equation}
Consequently, \eqref{eq:intro-green} and the continuity of \(\bar\partial f\)
give
\begin{equation}\label{eq:intro-smooth-limit}
        \frac{J_f(a,r)}{r^2}\longrightarrow 2\pi i\,\bar\partial f(a)
        \qquad (r\to0^+).
\end{equation}
Thus the hypothesis \(J_f(a,r)=o(r^2)\) is exactly the infinitesimal assertion
\(\bar\partial f(a)=0\) when the classical derivative exists.  The point of the
main theorem is that no differentiability assumption is needed.

This leads to the following infinitesimal circular version of Morera's theorem.

\begin{theorem}\label{thm:main}
Let \(D\subset\C\) be a domain and let \(f\in C(D)\).  Suppose that, for every
\(a\in D\),
\begin{equation}\label{eq:main-hypothesis}
        J_f(a,r)=o(r^2)\qquad (r\to0^+,\ 0<r<\rho_D(a)).
\end{equation}
Then \(f\) is holomorphic in \(D\).
\end{theorem}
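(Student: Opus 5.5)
The plan follows the abstract: turn the circular hypothesis on $f$ into an asymptotic mean-value condition on a local $\partial$-primitive of $f$, show that primitive is harmonic, and read off holomorphy of $f$. Holomorphy is local, so we fix a disk $B_0$ with $\overline{B_0}\subset D$ and work there.

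\textbf{Step 1: a local $\partial$-primitive.} On $B_0$ we take a distribution $u$ with $\partial u=f$. Concretely, we set $u=\frac1{\pi\bar z}*(f\chi_{B_0})$, the conjugate Cauchy transform. Since $f$ is bounded on $B_0$, $u$ is continuous (indeed $C^{1,\alpha}$-free but H\"older/log-Lipschitz suffices), and $\partial u=f$ holds in $\mathcal D'(B_0)$.

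\textbf{Step 2: a circular identity.} For $u$ continuous with $\partial u=f$ continuous, we want the exact analogue of \eqref{eq:intro-green} at second order, namely
\[
\frac{d}{dr}\Bigl(A_u(a,r)\Bigr)=\frac{1}{2\pi i\,r}\Bigl(\int_{C(a,r)}\partial u\,\ldots\Bigr).
\]
More usefully, in integrated form we aim for
\[
A_u(a,r)-u(a)=\frac{1}{2\pi i}\int_0^r \frac{J_f(a,s)}{s}\,ds\cdot(\text{const}).
\]
The formal computation is as follows. For smooth $u$, $\frac{d}{dr}A_u(a,r)=\frac1{2\pi r}\int_{C}\partial_n u\,|d\zeta|$, and $\partial_n u\,|d\zeta| = 2\,\mathrm{Re}$-type combinations. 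Since $\partial_n u\,ds=-i(\partial u\,d\zeta-\bar\partial u\,d\bar\zeta)$, the $\partial$-part gives $J_{\partial u}$, while the $\bar\partial$-part gives $\overline{J}$ of $\bar\partial u$. This is awkward because it involves $\bar\partial u$, which we do not control.

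We therefore fix $u$ and $f$ real/imaginary parts differently: write $f=\partial u$ and note $\Delta u=4\bar\partial f$. To avoid the $\bar\partial u$ term, we compare instead with the mean-value defect of $u$ weighted appropriately, using $\int_{C(a,r)}(\zeta-a)^{-1}\ldots$. The concrete target identity, to be proved first for mollified $u_\eps=u*\phi_\eps$ (for which $\partial u_\eps=f*\phi_\eps$) and then passed to the limit $\eps\to0$ using uniform convergence on compacts, is
\[
A_u(a,r)-u(a)=\frac{1}{2\pi i}\int_0^r \frac{\mathrm{Re}\text{-free form } J_f(a,s)}{s}\,ds.
\]
By Green's formula, $A_u(a,r)-u(a)=\frac1{2\pi}\int_0^r\frac1s\iint_{B(a,s)}\Delta u\,dA\,ds$. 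Moreover, $\iint_{B}\Delta u=4\iint_B\bar\partial\partial u=\frac{2}{i}J_{\partial u}(a,s)$ by \eqref{eq:intro-green} applied to $\partial u_\eps$. So only $J_f$ appears, and the limit $\eps\to0$ is justified because $J_{f*\phi_\eps}\to J_f$ uniformly in $s\le r$.

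\textbf{Step 3: harmonicity of $u$.} With the hypothesis $J_f(a,s)=o(s^2)$, the identity gives $A_u(a,r)-u(a)=o(r^2)$ at every $a\in B_0$. We then invoke the pointwise asymptotic mean-value criterion: a continuous $u$ with $\limsup_{r\to0}(A_u(a,r)-u(a))/r^2\ge0$ everywhere is subharmonic, and $\le 0$ is superharmonic. This is a Blaschke--Privalov type result; its proof perturbs by $\pm\delta|z|^2$ and applies the maximum principle. Applying it to the real and imaginary parts shows that $u$ is harmonic, hence smooth.

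\textbf{Step 4: conclusion.} Since $u$ is harmonic, $\partial u$ is holomorphic, because $\bar\partial\partial u=\frac14\Delta u=0$. Hence $f=\partial u$ is holomorphic on $B_0$, and therefore on $D$.

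The main obstacle is Step 2 without regularity. The hypothesis is pointwise and not uniform, so it cannot be pushed through distributions directly. The identity must hold exactly for each $a$ and $r$, which is why we go through mollification with uniform convergence. The other delicate point is the Blaschke--Privalov criterion with only pointwise $o(r^2)$ and no uniformity; the maximum-principle perturbation argument is exactly the tool that handles this.
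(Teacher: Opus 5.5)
Your proposal is correct and is essentially the paper's proof. It uses the same pieces: the primitive $u=\frac{1}{\pi\bar z}*(f\chi_{B_0})$; the identity $A_u(a,r)-u(a)=-\frac{i}{\pi}\int_0^r J_f(a,s)\,\frac{ds}{s}$, which your two Green's formulas yield in place of the paper's polar-coordinate computation, proved for mollifications and passed to the limit; and the Blaschke--Privalov criterion, which you sketch via the $\pm\delta|z|^2$ maximum-principle argument where the paper cites Kuznetsov. In the write-up, replace the placeholder formulas in Step 2 by that explicit identity, use $|J_g(a,s)|\le 2\pi s\sup_{C(a,s)}|g|$ to control the $ds/s$ integrals near $0$ and in the $\eps\to0$ limit, and note that $f=\partial u$ pointwise because both are continuous and agree in $\D'(B_0)$.
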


As an immediate consequence, we obtain the corresponding exact local circular
condition.

\begin{corollary}\label{cor:exact-circles}
Let \(D\subset\C\) be a domain and let \(f\in C(D)\).  Suppose that, for every
\(a\in D\), there is \(\delta(a)>0\) such that
\[
        J_f(a,r)=0
        \qquad\text{whenever }0<r<\min\{\delta(a),\rho_D(a)\}.
\]
Then \(f\) is holomorphic in \(D\).
\end{corollary}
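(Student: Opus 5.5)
The statement to prove is Corollary~\ref{cor:exact-circles}, and I would deduce it directly from Theorem~\ref{thm:main}. The only work is to check that the exact vanishing hypothesis gives the infinitesimal hypothesis \eqref{eq:main-hypothesis} at every point of \(D\).

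Fix \(a\in D\) and set \(r_0=\min\{\delta(a),\rho_D(a)\}>0\). The number \(r_0\) is positive because \(\delta(a)>0\) and \(\rho_D(a)>0\), the latter since \(D\) is open. By assumption \(J_f(a,r)=0\) for every \(0<r<r_0\). Hence
\[
\frac{J_f(a,r)}{r^2}=0 \qquad\text{for all } 0<r<r_0,
\]
and so \(J_f(a,r)/r^2\to0\) as \(r\to0^+\) with \(0<r<\rho_D(a)\). This says exactly that \(J_f(a,r)=o(r^2)\) in the sense of \eqref{eq:main-hypothesis}. The \(o(\cdot)\) condition concerns only arbitrarily small radii, so it makes no difference that the circular integrals may fail to vanish for \(r_0\le r<\rho_D(a)\).

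Since \(a\in D\) was arbitrary and \(f\in C(D)\), every hypothesis of Theorem~\ref{thm:main} holds. The theorem therefore shows that \(f\) is holomorphic in \(D\).

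There is no real obstacle here. All of the analytic difficulty sits in Theorem~\ref{thm:main}, which we may assume. The one point that needs care is the bookkeeping of radii: the function \(\delta(a)\) is allowed to depend on \(a\) with no uniformity, and no continuity of \(\delta\) in \(a\) is required. This causes no trouble because the hypothesis of Theorem~\ref{thm:main} is purely pointwise in \(a\).
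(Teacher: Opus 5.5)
Your proposal is correct and matches the paper's proof. The paper likewise notes in one line that exact vanishing of \(J_f(a,r)\) for \(0<r<\min\{\delta(a),\rho_D(a)\}\) gives \(J_f(a,r)=o(r^2)\) at every center, and then applies Theorem~\ref{thm:main}; your remarks on the positivity of the threshold radius and the lack of uniformity in \(\delta(a)\) are accurate details the paper leaves implicit.
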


\begin{proof}
The stated vanishing implies \eqref{eq:main-hypothesis} at every center.
\end{proof}

Theorem~\ref{thm:main} and Corollary~\ref{cor:exact-circles} give the asserted
affirmative answer to the two alternatives in the Gaier--Zalcman problem.

\subsection{Proof strategy}\label{subsec:proof-strategy}
The proof does not differentiate \(f\) directly.  On a relatively compact
subdomain we first construct a continuous function \(F\) satisfying
\(\partial F=f\) in the sense of distributions.  If \(F\) were smooth, a
polar-coordinate
calculation would show that the circular integral of \(\partial F\) is the
radial derivative of the circular mean of \(F\).  The same identity remains
valid under the weak hypothesis \(\partial F=f\):
\begin{equation}\label{eq:intro-circle-identity}
        A_F(a,R)-F(a)
        =-\frac{i}{\pi}\int_0^R\frac{J_f(a,s)}{s}\,ds .
\end{equation}
The assumption \(J_f(a,s)=o(s^2)\) then implies
\(A_F(a,R)-F(a)=o(R^2)\) as \(R\to0^+\) at every point.  A second-order
asymptotic form of the mean-value
characterization of harmonic functions shows that the real and imaginary parts
of \(F\) are harmonic.  Hence \(F\) is smooth, \(\partial F\) is holomorphic,
and the distributional identity \(\partial F=f\) becomes a pointwise identity by
continuity.

\subsection{Organization of the paper}\label{subsec:organization}
Section~\ref{sec:preliminaries} establishes the first two local tools used in
the argument---a distributional \(\partial\)-primitive and the weak circular
identity \eqref{eq:intro-circle-identity}---and recalls the asymptotic mean-value
criterion for harmonicity.  Section~\ref{sec:proof-main} proves
Theorem~\ref{thm:main}.
Section~\ref{sec:remarks} records two comments on the scale \(r^2\) and on the
local nature of the hypothesis.

\section{Preliminaries}\label{sec:preliminaries}

This section contains only local facts.  They are included in order to fix the
normalizations of \(\partial\), \(\bar\partial\), and \(d\zeta\), and to make the
passage from weak derivatives to circular means explicit.

We use the standard distributional fundamental-solution identities
\begin{equation}\label{eq:fundamental-solutions}
        \bar\partial\left(\frac1{\pi z}\right)=\delta_0,
        \qquad
        \partial\left(\frac1{\pi\bar z}\right)=\delta_0 .
\end{equation}
For background on distributions and fundamental solutions one may consult
Folland \cite[Chs.~0--1]{Folland1995} or H\"ormander \cite[Ch.~I]{Hormander1990}.
The first lemma is the corresponding local primitive construction for the
operator \(\partial\).

\begin{lemma}\label{lem:primitive}
Let \(\Omega\subset\C\) be open and let \(h\in C(\Omega)\).  If
\(V\Subset\Omega\), then there are an open set \(W\) and a function
\(U\in C(W)\) such that
\[
        \overline V\subset W\Subset\Omega,
        \qquad
        \partial U=h\quad\text{in }\D'(W).
\]
\end{lemma}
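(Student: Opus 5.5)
Our plan is to construct \(U\) as the Cauchy-type convolution of a compactly supported cutoff of \(h\) with the fundamental solution \(1/(\pi\bar z)\) from \eqref{eq:fundamental-solutions}.

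\textbf{Choice of sets and cutoff.} Since \(V\Subset\Omega\), the compact set \(\overline V\) has positive distance \(d\) to \(\C\setminus\Omega\) (take \(d=1\) if \(\Omega=\C\)). Let
\[
        W=\{z:\dist(z,\overline V)<d/3\}.
\]
Then \(\overline V\subset W\) and \(\overline W\subset\{z:\dist(z,\overline V)\le d/3\}\), which is a compact subset of \(\Omega\); hence \(W\Subset\Omega\). Choose \(\chi\in C_c^\infty(\Omega)\) with \(0\le\chi\le1\) and \(\chi\equiv1\) on \(\{\dist(\cdot,\overline V)<2d/3\}\). Set \(g=\chi h\), extended by zero outside \(\Omega\). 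Then \(g\) is continuous and compactly supported on \(\C\).

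\textbf{Definition and continuity of \(U\).} Define
\[
        U(z)=\frac1\pi\iint_{\C}\frac{g(\zeta)}{\overline{z-\zeta}}\,dA(\zeta),
\]
that is, \(U=g*E\) with \(E=1/(\pi\bar z)\). Since \(E\in L^1_{\loc}\) and \(g\in C_c\), the convolution is defined everywhere. To see that it is continuous on \(\C\), write \(U(z)=\iint g(z-w)E(w)\,dA(w)\). As \(z\) ranges over a bounded set, the integrand is supported in a fixed compact set, and \(|g(z-w)E(w)|\le\|g\|_\infty|E(w)|\), which is integrable there. By uniform continuity of \(g\), dominated convergence gives continuity of \(U\).

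\textbf{The distributional equation, the main step.} For \(\varphi\in C_c^\infty(W)\), we want
\[
        \langle\partial U,\varphi\rangle=-\iint U\,\partial\varphi\,dA=\iint h\varphi\,dA.
\]
Substituting the definition of \(U\) and applying Fubini (justified because \(|g(\zeta)||E(z-\zeta)||\partial\varphi(z)|\) is integrable on a compact product set, by local integrability of \(E\)) gives
\[
        -\iint g(\zeta)\Bigl(\iint E(z-\zeta)\,\partial\varphi(z)\,dA(z)\Bigr)dA(\zeta).
\]
By \eqref{eq:fundamental-solutions} translated to \(\zeta\), the inner integral equals \(-\varphi(\zeta)\). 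The whole expression is therefore \(\iint g\varphi\,dA\). Since \(\operatorname{supp}\varphi\subset W\) and \(\chi\equiv1\) on \(W\), this equals \(\iint h\varphi\,dA\). Hence \(\partial U=h\) in \(\D'(W)\), and restricting \(U\) to \(W\) finishes the argument.

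The only delicate point is the Fubini justification together with the sign and normalization in the fundamental-solution identity. If one prefers not to cite \eqref{eq:fundamental-solutions}, it can be checked directly by excising \(B(\zeta,\eps)\) and applying Green's formula: the boundary term tends to \(-\varphi(\zeta)\) as \(\eps\to0\).
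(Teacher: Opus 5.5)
Your proof is correct and takes the same route as the paper: you convolve the cutoff \(\chi h\) with the fundamental solution \(1/(\pi\bar z)\) and use \(\chi=1\) on \(W\). You also spell out details the paper leaves brief, namely an explicit \(W\), continuity via dominated convergence rather than continuity of translations in \(L^1_{\mathrm{loc}}\), and the identity \(\partial(K*g)=(\partial K)*g\) checked through Fubini and the translated fundamental-solution identity, with the sign verified.
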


\begin{proof}
Choose an open set \(W\) with \(\overline V\subset W\Subset\Omega\), and choose
\(\chi\in C_c^\infty(\Omega)\) such that \(\chi=1\) near \(\overline W\).  Extend
\(\chi h\) by zero to \(\C\), put \(K(z)=1/(\pi\bar z)\), and set
\(U=K*(\chi h)\).  Since \(K\in L^1_\loc(\C)\) and \(\chi h\) is bounded and
compactly supported, continuity of translations in \(L^1_\loc(\C)\) gives
\(U\in C(\C)\).  By \eqref{eq:fundamental-solutions},
\[
        \partial U=(\partial K)*(\chi h)=\delta_0*(\chi h)=\chi h.
\]
Since \(\chi=1\) on \(W\), the identity \(\partial U=h\) holds in \(\D'(W)\).
\end{proof}

The next lemma is the analytic link between the circular integrals of a weak
\(\partial\)-derivative and the circular means of its primitive.  The proof is
written in two stages: first the smooth calculation, where the sign and constant
are visible, and then the standard mollification argument.

\begin{lemma}\label{lem:circle-identity}
Let \(\Omega\subset\C\) be open, let \(U,h\in C(\Omega)\), and assume that
\(\partial U=h\) in \(\D'(\Omega)\).
If \(a\in\Omega\), \(R>0\), and \(\overline{B(a,R)}\subset\Omega\), then
\begin{equation}\label{eq:circle-id}
        A_U(a,R)-U(a)
        =-\frac{i}{\pi}\int_0^R\frac{J_h(a,s)}{s}\,ds .
\end{equation}
The integral on the right is absolutely convergent near \(0\).
\end{lemma}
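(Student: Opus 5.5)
First I would prove \eqref{eq:circle-id} when \(U\in C^1\) near \(\overline{B(a,R)}\) and \(h=\partial U\) classically. After that I would pass to the general case by mollification.

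\emph{Smooth case.} Write \(\zeta=a+se^{it}\). Then \(d\zeta=ise^{it}\,dt\), and the chain rule gives
\[
\frac{\partial}{\partial s}U(a+se^{it})=e^{it}\partial U+e^{-it}\bar\partial U,
\qquad
\frac{\partial}{\partial t}U(a+se^{it})=ise^{it}\partial U-ise^{-it}\bar\partial U .
\]
Solving for the \(\partial U\) term, \(e^{it}\partial U=\tfrac12\bigl(\partial_s U-\tfrac{i}{s}\partial_t U\bigr)\). The \(\partial_t\) term integrates to zero over \([0,2\pi]\) by periodicity. Therefore
\[
J_h(a,s)=is\int_0^{2\pi}e^{it}\partial U\,dt=\frac{is}{2}\cdot 2\pi\,\frac{d}{ds}A_U(a,s)=i\pi s\,\frac{d}{ds}A_U(a,s).
\]
Hence \(\frac{d}{ds}A_U(a,s)=-\frac{i}{\pi}J_h(a,s)/s\). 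Integrating from \(0\) to \(R\), and using \(A_U(a,s)\to U(a)\) as \(s\to0\), gives \eqref{eq:circle-id}. In this case \(|J_h(a,s)/s|\le 2\pi\sup|h|\), so the integral is harmless.

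\emph{General case.} Pick \(R'>R\) with \(\overline{B(a,R')}\subset\Omega\). Let \(\varphi_\eps\) be a standard mollifier and put \(U_\eps=U*\varphi_\eps\) and \(h_\eps=h*\varphi_\eps\), both defined on a neighbourhood of \(\overline{B(a,R)}\) for small \(\eps\). Since convolution commutes with distributional derivatives, \(\partial U_\eps=h_\eps\) classically and \(U_\eps\in C^\infty\). The smooth case therefore applies to \((U_\eps,h_\eps)\).

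Because \(U\) and \(h\) are uniformly continuous on \(\overline{B(a,R')}\), we have \(U_\eps\to U\) and \(h_\eps\to h\) uniformly on \(\overline{B(a,R)}\). This gives \(A_{U_\eps}(a,R)-U_\eps(a)\to A_U(a,R)-U(a)\). On the right-hand side, \(|J_{h_\eps}(a,s)-J_h(a,s)|/s\le 2\pi\sup_{\overline{B(a,R)}}|h_\eps-h|\), uniformly in \(s\in(0,R]\), so the integrals converge. This yields \eqref{eq:circle-id}.

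\emph{Absolute convergence.} The bound \(|J_h(a,s)|\le 2\pi s\sup_{\overline{B(a,R)}}|h|\) shows that the integrand is bounded on \((0,R]\), which is stronger than what is claimed.

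\emph{Main obstacle.} The point needing the most care is the sign and constant bookkeeping in the polar decomposition of \(\partial\): checking that \(e^{it}\partial U\) equals half of the radial derivative plus a \(t\)-derivative term, and that \(d\zeta\) contributes exactly \(is\,e^{it}\,dt\). The approximation step is routine once the bound \(|J|/s\le 2\pi\sup|h|\) is in hand.
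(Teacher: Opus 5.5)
Your proposal is correct and follows the paper's proof essentially step for step. It uses the same polar-coordinate computation $J_h(a,s)=i\pi s\,\frac{d}{ds}A_U(a,s)$ in the smooth case, then mollification with the uniform bound $|J_{h_\eps}(a,s)-J_h(a,s)|/s\le 2\pi\sup|h_\eps-h|$ to pass to the limit. The only differences are cosmetic: you mollify locally rather than first multiplying by a cutoff $\chi$, and you use boundedness of $J_h(a,s)/s$ rather than the paper's sharper estimate $|J_h(a,s)/s|\le 2\pi\omega_h(a,s)\to0$, which already suffices for the stated absolute convergence.
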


\begin{proof}
We first note the endpoint estimate.  Since \(h\) is continuous and
\(\int_{C(a,s)}d\zeta=0\), we have
\(J_h(a,s)=\int_{C(a,s)}(h(\zeta)-h(a))\,d\zeta\).  Put
\(\omega_h(a,s)=\sup_{|w-a|\le s}|h(w)-h(a)|\).  Then
\begin{equation}\label{eq:endpoint-bound}
        \left|\frac{J_h(a,s)}{s}\right|
        \le 2\pi\omega_h(a,s)\longrightarrow0
        \qquad (s\to0^+).
\end{equation}
Estimate \eqref{eq:endpoint-bound} shows that the right-hand side of
\eqref{eq:circle-id} is well defined at the lower endpoint.

Assume now that \(U\in C^1(\Omega)\) and \(h=\partial U\) classically.  With
\(\zeta=a+se^{it}\), polar coordinates about \(a\) give
\[
        \partial=\frac12e^{-it}\left(\partial_s-\frac{i}{s}\partial_t\right),
        \qquad
        d\zeta=ise^{it}\,dt .
\]
Therefore, for \(0<s<R\),
\begin{align*}
        J_h(a,s)
        &=\int_0^{2\pi}\partial U(a+se^{it})\,ise^{it}\,dt       \\
        &=\frac{is}{2}\int_0^{2\pi}\partial_sU(a+se^{it})\,dt
          +\frac12\int_0^{2\pi}\partial_tU(a+se^{it})\,dt          \\
        &=i\pi s\frac{d}{ds}A_U(a,s),
\end{align*}
where the \(\partial_t\)-term integrates to zero over one period.  Since
\(A_U(a,s)\to U(a)\) as \(s\to0^+\), integration from \(0\) to \(R\) yields
\eqref{eq:circle-id}.

It remains to pass to the stated weak case.  Choose \(R_1>R\) such that
\(\overline{B(a,R_1)}\subset\Omega\), and choose \(\chi\in C_c^\infty(\Omega)\)
such that \(\chi=1\) in a neighbourhood of \(\overline{B(a,R_1)}\).  Extend
\(\chi U\) and \(\chi h\) by zero to \(\C\).  Let \(\varphi_\eps\) be a standard
mollifier supported in \(B(0,\eps)\), and for \(0<\eps<R_1-R\) put
\[
        U_\eps=\varphi_\eps*(\chi U),
        \qquad
        h_\eps=\varphi_\eps*(\chi h).
\]
For points of \(B(a,R_1-\eps)\), the convolution sees only the region where
\(\chi=1\).  Consequently \(\partial U_\eps=h_\eps\) in
\(B(a,R_1-\eps)\).  Since
\(\overline{B(a,R)}\subset B(a,R_1-\eps)\), this identity holds in a
neighbourhood of \(\overline{B(a,R)}\), and the smooth identity applies to
\((U_\eps,h_\eps)\) at radius \(R\).

As \(\eps\to0\), the functions \(U_\eps\) and \(h_\eps\) converge uniformly to
\(U\) and \(h\), respectively, on \(\overline{B(a,R)}\).  Moreover, for
\(0<s\le R\),
\[
        \left|\frac{J_{h_\eps}(a,s)-J_h(a,s)}{s}\right|
        \le 2\pi\|h_\eps-h\|_{L^\infty(\overline{B(a,R)})}.
\]
Hence
\[
        \int_0^R\left|\frac{J_{h_\eps}(a,s)-J_h(a,s)}{s}\right|\,ds
        \le 2\pi R\|h_\eps-h\|_{L^\infty(\overline{B(a,R)})}\to0.
\]
The right-hand sides of the mollified identities therefore converge to the
right-hand side of \eqref{eq:circle-id}; the left-hand sides converge by the
uniform convergence of \(U_\eps\).  This proves the identity.
\end{proof}

The last preliminary result is the classical Blaschke asymptotic mean-value
criterion.  The planar form below is precisely Kuznetsov
\cite[Thm.~1.6]{Kuznetsov2019}.  For related mean-value and
generalized-Laplacian results, see Axler--Bourdon--Ramey
\cite[Thms.~1.4 and~1.24]{AxlerBourdonRamey2001}, Netuka--Vesel\'y
\cite{NetukaVesely1994}, Pokrovskii \cite{Pokrovskii2017}, and
C\'ordoba--Oc\'ariz \cite{CordobaOcariz2020}.

\begin{lemma}\label{lem:asymptotic-mean}
Let \(\Omega\subset\C\) be open and let \(u\in C(\Omega)\) be real-valued.  If,
for every \(a\in\Omega\),
\begin{equation}\label{eq:asymptotic-mvp}
        A_u(a,r)-u(a)=o(r^2)\qquad (r\to0^+),
\end{equation}
then \(u\) is harmonic in \(\Omega\).
\end{lemma}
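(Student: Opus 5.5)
We would prove Lemma~\ref{lem:asymptotic-mean} by the classical Blaschke--Privalov comparison argument: compare \(u\) with the harmonic function having the same boundary values on a small disc, and use a maximum principle for functions satisfying the asymptotic mean-value condition. Since harmonicity is local, it suffices to fix a closed disc \(\overline{B}=\overline{B(z_0,\rho)}\subset\Omega\) and show that \(u\) is harmonic in \(B\). Let \(P\) be the Poisson integral of \(u|_{\partial B}\). Then \(P\) is harmonic in \(B\), continuous on \(\overline B\), and equal to \(u\) on \(\partial B\). Put \(w=u-P\in C(\overline B)\). Then \(w=0\) on \(\partial B\), and, because \(P\) satisfies the exact mean-value property, \(A_w(a,r)-w(a)=o(r^2)\) at every \(a\in B\).

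The key step is to show that \(w\le0\) on \(B\); the same argument applied to \(-w\) then gives \(w\equiv0\), so \(u=P\) is harmonic. For \(\eps>0\), set \(v_\eps(z)=w(z)+\eps|z-z_0|^2\). For the quadratic term, a direct computation gives
\[
A_{|\cdot-z_0|^2}(a,r)=|a-z_0|^2+r^2,
\]
so \(A_{v_\eps}(a,r)-v_\eps(a)=\eps r^2+o(r^2)\), which is strictly positive for all small \(r\). Suppose \(v_\eps\) attains its maximum over \(\overline B\) at an interior point \(a\). Then \(A_{v_\eps}(a,r)\le v_\eps(a)\) for every small \(r\), which contradicts the strict positivity just obtained. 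Hence the maximum is attained on \(\partial B\), where \(v_\eps=\eps\rho^2\). It follows that \(w\le v_\eps\le\eps\rho^2\) on \(B\), and letting \(\eps\to0\) gives \(w\le0\).

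The main point requiring care is this strict-subharmonic perturbation. The hypothesis is only \(o(r^2)\), so \(w\) itself might have an interior maximum without any contradiction; this is why the \(\eps|z-z_0|^2\) term is needed. It works because its mean excess is exactly \(\eps r^2\), which dominates the \(o(r^2)\) error at the fixed point \(a\). We do not need uniformity in \(a\), since the contradiction is derived at a single maximizing point.

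The remaining facts are classical: the solvability of the Dirichlet problem on a disc for continuous boundary data, and the fact that harmonic functions satisfy the exact mean-value property. Alternatively, one can simply cite Kuznetsov \cite[Thm.~1.6]{Kuznetsov2019}, as the paper indicates.
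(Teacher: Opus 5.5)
The paper gives no proof of this lemma. It quotes it as the classical Blaschke criterion, in the planar form of Kuznetsov's Theorem~1.6, and moves on. Your argument is a correct, self-contained proof along the standard Blaschke--Privalov lines. Subtracting the Poisson integral reduces everything to a maximum principle for $w=u-P$ with $w|_{\partial B}=0$. The perturbation $\eps|z-z_0|^2$, whose mean excess is exactly $\eps r^2$, then turns the $o(r^2)$ hypothesis into a contradiction at a putative interior maximum.

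Compared with the citation, your version costs half a page but needs only the Poisson solution of the Dirichlet problem on a disc and the exact mean-value property. It also makes two things explicit that the citation leaves implicit:

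\begin{enumerate}
\item The contradiction is reached at a single maximizing point, so no uniformity in $a$ is used. This fits Remark~\ref{rem:locality}.
\item Only one-sided information enters. The bound $w\le0$ uses only $\limsup_{r\to0^+}(A_u(a,r)-u(a))/r^2\ge0$, and the bound $-w\le0$ uses only $\liminf_{r\to0^+}(A_u(a,r)-u(a))/r^2\le0$. So your proof actually gives a stronger Privalov-type criterion; for example, $o(r^2)$ along some sequence $r_n\to0$ depending on $a$ would suffice.
\end{enumerate}

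That extra generality is not needed in Section~\ref{sec:proof-main}, where $A_F(a,r)-F(a)=o(r^2)$ is obtained for all small $r$.
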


\section{Proof of the main theorem}\label{sec:proof-main}

Let \(V\Subset D\) be an arbitrary open set.  By Lemma~\ref{lem:primitive},
applied with \(h=f\), there are an open set \(W\) with
\(\overline V\subset W\Subset D\) and a function \(F\in C(W)\) such that
\begin{equation}\label{eq:partial-F-f}
        \partial F=f\quad\text{in }\D'(W).
\end{equation}

Fix \(a\in V\).  For all sufficiently small \(r>0\),
\(\overline{B(a,r)}\subset V\).  Lemma~\ref{lem:circle-identity}, used with
\(U=F\) and \(h=f\), gives
\begin{equation}\label{eq:mean-F}
        A_F(a,r)-F(a)
        =-\frac{i}{\pi}\int_0^r\frac{J_f(a,s)}{s}\,ds .
\end{equation}
The hypothesis \eqref{eq:main-hypothesis} is pointwise in the center.  Hence,
for this fixed \(a\) and for every \(\eta>0\), there is \(\delta>0\) such that
\(|J_f(a,s)|\le \eta s^2\) whenever \(0<s<\delta\).  If \(0<r<\delta\), then
\eqref{eq:mean-F} yields
\[
        |A_F(a,r)-F(a)|
        \le \frac1\pi\int_0^r \eta s\,ds
        =\frac{\eta}{2\pi}r^2 .
\]
Since \(\eta\) is arbitrary,
\begin{equation}\label{eq:F-asymptotic-mean}
        A_F(a,r)-F(a)=o(r^2)\qquad (r\to0^+)
\end{equation}
at the point \(a\).  As \(a\in V\) was arbitrary, \eqref{eq:F-asymptotic-mean}
holds at every point of \(V\).

By \eqref{eq:F-asymptotic-mean}, the hypothesis
\eqref{eq:asymptotic-mvp} of Lemma~\ref{lem:asymptotic-mean} holds for both
\(\operatorname{Re}F\) and \(\operatorname{Im}F\).  Hence \(F\) is harmonic
componentwise in \(V\).  Therefore \(F\in C^\infty(V)\), and
\begin{equation}\label{eq:dbar-partial-F-zero}
        \bar\partial(\partial F)=\frac14\Delta F=0
\end{equation}
classically in \(V\).  By \eqref{eq:dbar-partial-F-zero}, \(\partial F\) is
holomorphic in \(V\).

It remains only to identify \(\partial F\) with the original continuous
function \(f\).  By \eqref{eq:partial-F-f}, the two functions agree as
distributions on \(V\).  Hence they agree almost everywhere.  Since \(f\) is
continuous and \(\partial F\) is smooth, the difference \(f-\partial F\) is a
continuous function that vanishes almost everywhere; it must vanish everywhere.
Thus \(f=\partial F\) pointwise on \(V\), and \(f\) is holomorphic on \(V\).
Because \(V\Subset D\) was arbitrary, \(f\) is holomorphic in \(D\).  This proves
Theorem~\ref{thm:main}.

\section{Final remarks}\label{sec:remarks}

We first make explicit the coefficient detected by the normalized circular
integral in the differentiable case.

\begin{remark}\label{rem:coefficient}
Formula \eqref{eq:intro-smooth-limit} shows that, for \(C^1\) functions, the
quantity \(r^{-2}J_f(a,r)\) detects the value of \(2\pi i\,\bar\partial f(a)\).
For example, if \(f(z)=\bar z\), then
\(\int_{|\zeta-a|=r}\bar\zeta\,d\zeta=2\pi i\,r^2\).  Thus the order \(r^2\)
in Theorem~\ref{thm:main} is the first nontrivial order
at which non-holomorphicity can be seen in the smooth case.
\end{remark}

We close by emphasizing the entirely local nature of the argument.

\begin{remark}\label{rem:locality}
The proof uses only the little-oh condition at each fixed center \(a\).  No
uniformity in \(a\) is assumed.  The construction of the \(\partial\)-primitive
is also local: it is made on relatively compact subdomains, and no global
topological condition on \(D\) is involved.
\end{remark}

\end{document}